\documentclass[10pt,leqno]{amsart}
\usepackage{amsmath}
\usepackage{amsfonts}
\usepackage{amssymb}
\usepackage{mathtools}
\usepackage{graphicx}
\usepackage{color}
\usepackage{hyperref}
\allowdisplaybreaks[4]

\usepackage{xcolor}
\newtheorem{theorem}{Theorem}[section]
\newtheorem*{theorem*}{Theorem}
\newtheorem{lemma}[theorem]{Lemma}

\newtheorem{proposition}[theorem]{Proposition}

\theoremstyle{definition}

\newtheorem{remark}{Remark}[section]

\newcommand{\Scal}{\operatorname{Scal}}
\newcommand{\tr}{\operatorname{tr}}
\newcommand{\R}{\mathbb{R}}
\newcommand{\KN}{\mathbin{\bigcirc\mspace{-15mu}\wedge\mspace{3mu}}}

\title[Einstein Manifolds under Two Spectral Conditions]
{Rigidity of Einstein Manifolds under Two Eigenvalue Conditions on the Curvature Operator of the Second Kind}

\author[Cheng]{Haiqing Cheng}
\address{School of Mathematical Sciences, Soochow University, Suzhou, 215006, China}
\email{chq4523@163.com}

\author[Wang]{Kui Wang}\thanks{}
\address{School of Mathematical Sciences, Soochow University, Suzhou, 215006, China}
\email{kuiwang@suda.edu.cn}

\subjclass[2020]{53C20, 53C24, 53C25}
\keywords{Einstein manifold, curvature operator of the second kind, spectral condition,
Bochner formula, rigidity theorem}

\begin{document}
\begin{abstract}
We prove a rigidity theorem for closed Einstein manifolds of dimension $n\ge6$ under two lower bounds on the curvature operator of the second kind. More precisely, for $L_1, L_2\ge0$ and some real number $\alpha>1$ in a suitable range, we consider
\[
\lambda_1\ge-L_1\bar\lambda,
\qquad
\frac1\alpha\sum_{j=1}^{\alpha}\lambda_j
\ge-L_2\bar\lambda.
\]
Here $\lambda_1\le\cdots\le\lambda_N$ are the eigenvalues of the curvature operator of the second kind $\mathring R$,
$N=(n-1)(n+2)/2$, and
$\bar\lambda=N^{-1}\sum_{j=1}^N\lambda_j$. Let $\theta(n, \alpha)$ be the constant appearing in \cite[Theorem~1.1]{CW26}; see \eqref{eq:theta-definition}. In the parameter range considered here, $L_2>\theta(n, \alpha)$, so the second condition is strictly weaker than the corresponding condition in \cite{CW26}, while the first condition and that condition do not imply each other. Under an additional explicit relation between $L_1$ and $L_2$, we prove that the manifold is either flat or a spherical space form.
\end{abstract}

\maketitle

\section{Introduction and main result}
A central problem concerning the curvature operator of the second kind is Nishikawa's conjecture \cite{Nishikawa86}. He conjectured that a closed Riemannian manifold with positive (respectively, nonnegative) curvature operator of the second kind is diffeomorphic to a spherical space form (respectively, a Riemannian locally symmetric space). The positive case was proved under the weaker assumption of two-positivity by Cao, Gursky, and Tran \cite{CGT23}. Li subsequently weakened two-positivity to three-positivity and classified the three-nonnegative case \cite{Li21}. Nienhaus, Petersen, and Wink \cite{NPW23} derived a Bochner formula for the curvature operator of the second kind and used it to prove vanishing and rigidity results under partial positivity assumptions. These results indicate that conditions on the eigenvalues of the curvature operator of the second kind provide an effective approach to problems in global Riemannian geometry.

For Einstein manifolds, Kashiwada \cite{Kashiwada93} proved that
positivity of the curvature operator of the second kind implies constant
sectional curvature. Cao, Gursky, and Tran \cite{CGT23} showed that
four-positivity implies constant sectional curvature, whereas
four-nonnegativity implies local symmetry. Li \cite{Li22JGA} extended
these results to $4\frac12$-positivity and $4\frac12$-nonnegativity,
respectively. 
Using a Bochner formula, Nienhaus, Petersen, and Wink \cite{NPW23}
proved that a compact Einstein manifold is either flat or a rational
homology sphere if its curvature operator of the second kind satisfies
a suitable partial nonnegativity condition. Dai and Fu \cite{DF24}
derived a Bochner formula for the Riemann curvature tensor on Einstein
manifolds and used it to show that a compact Einstein manifold has
constant sectional curvature under suitable nonnegativity conditions
that depend on the dimension.

More recently, Li \cite{Li25} introduced a two-parameter family of cone conditions. Let
$\lambda_1\le\cdots\le\lambda_N$ be the eigenvalues of the curvature operator of the
second kind, where $N=\frac{(n-1)(n+2)}2$ and $\bar\lambda=\frac1N\sum_{j=1}^N\lambda_j$. 
For a real number $1\le\alpha<N$, the sum of the first $\alpha$ eigenvalues is defined by
\begin{equation}\label{eq:interpolated-sum-introduction}
\sum_{j=1}^{\alpha}\lambda_j
:=
\sum_{j=1}^{\lfloor\alpha\rfloor}\lambda_j
+(\alpha-\lfloor\alpha\rfloor)\lambda_{\lfloor\alpha\rfloor+1}.
\end{equation}
For $\theta>-1$, the cone condition introduced in \cite{Li25} takes the form
\[
\frac1\alpha\sum_{j=1}^{\alpha}\lambda_j\ge-\theta\bar\lambda.
\]

For Einstein manifolds, Cheng and Wang first considered the case $\alpha=1$ and proved that a closed
Einstein manifold satisfying the corresponding lower bound is either flat or
a spherical space form \cite{CW26-1}. They later treated the case $\alpha=2$ in dimensions $n=4, 5$ and $n\ge 8$ \cite{CW25-2}. This result was extended by Fu and Lu to positive integer values
of $\alpha$ in a suitable dimension-dependent range \cite{FL25}. 
Subsequently, Cheng and Wang  extended these results to real values of
$\alpha$ \cite{CW26}. More precisely, for $n\ge6$ and $\alpha$ in the
range specified in \cite[Theorem 1.1]{CW26}, they proved that a closed
Einstein manifold satisfying
$$
\frac1\alpha\sum_{j=1}^{\alpha}\lambda_j
\ge-\theta(n, \alpha)\bar\lambda,
$$
where
\begin{equation}\label{eq:theta-definition}
\theta(n, \alpha)
=
\frac{3(N-n+1)(N-\alpha)}
{3n\alpha(N-2)+(N-3)(N-\alpha)}-1,
\end{equation}
is either flat or a spherical space form. The cases $n=4, 5$ are treated
separately in \cite[Theorem 1.2]{CW26}.



In the present paper, we replace the single cone condition by two simultaneous bounds:
one controls the smallest eigenvalue, and the other is a weaker lower bound for the first
$\alpha$ eigenvalues. Our main result is the following.

\begin{theorem}\label{thm:main}
Let $(M^n, g)$ be a connected closed Einstein manifold of dimension $n\ge6$, and set $N=\frac{(n-1)(n+2)}2$. Let
\[
1<\alpha\le
\min\left\{
\frac{n^4-n^3+8n-8}{3n^3+5n^2-22n+8},
\frac{n^2+n-8}{4n-8}
\right\},
\]
and let $\theta(n, \alpha)$ be given by \eqref{eq:theta-definition}. Suppose that
$L_1, L_2\ge0$ satisfy
\begin{equation}\label{eq:main-independence-region}
\theta(n, \alpha)<L_2<L_1<
\frac{N(\alpha-1)+\alpha(N-1)\theta(n, \alpha)}{N-\alpha}
\end{equation}
and
\begin{equation}\label{eq:main-bochner-region}
(N-3)L_2+\frac{3n(N-2)}{N-1}L_1
\le
2N-6n+6+\frac{3n}{N-1}.
\end{equation}
If the eigenvalues of $\mathring R$ satisfy, at every point of $M$,
\begin{equation}\label{eq:main-two-conditions}
\lambda_1\ge-L_1\bar\lambda,
\qquad
\frac1\alpha\sum_{j=1}^{\alpha}\lambda_j
\ge-L_2\bar\lambda,
\end{equation}
then $M$ is either flat or a spherical space form. In particular, if $M$ is simply
connected, then $M$ is isometric, up to scaling, to the round sphere.
\end{theorem}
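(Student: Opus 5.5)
The plan follows the Bochner-technique strategy of \cite{CW26-1, CW26, NPW23, DF24}. For an Einstein metric the Weyl part $W$ of $R$ satisfies a Lichnerowicz-type Bochner identity. Integrating it over the closed manifold $M$ gives
\[
0\le\int_M|\nabla W|^2=-\int_M \langle \mathcal{Q}(W),W\rangle ,
\]
where the quadratic curvature term $\langle\mathcal Q(W),W\rangle$ can be written through the curvature operator of the second kind. Following Nienhaus--Petersen--Wink, we express this term as a weighted sum of the eigenvalues $\lambda_j$ of $\mathring R$ against the squared norms $|S_j W|^2$. Here $S_j$ is an orthonormal eigenbasis of $S^2_0(T_pM)$, and the weights satisfy $a_j:=|S_jW|^2\ge0$, $\sum_j a_j=c_n|W|^2$. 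The needed pointwise input is an upper bound of the form $a_j\le \kappa_n\,c_n|W|^2$ for each single $j$ (cf.\ the estimates used in \cite{CW26}). In addition, the scalar-curvature part contributes a term proportional to $\bar\lambda|W|^2$, since $\bar\lambda=\Scal/(n(n-1))\cdot(\text{const})$ for Einstein metrics. Our goal is the pointwise inequality
\[
\sum_{j=1}^N \lambda_j a_j - C_n\bar\lambda\,c_n|W|^2 \ \ge\ 0,
\]
with strict positivity unless $W=0$ or $\bar\lambda$ vanishes appropriately. Once we have it, $\nabla W=0$ and the integrand vanishes, which forces $W=0$. Then $M$ has constant sectional curvature, so it is flat or a spherical space form. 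The case $\bar\lambda<0$ is excluded because the conditions at $\lambda_1$ force $\lambda_N\ge\bar\lambda$ configurations incompatible with the trace, exactly as in \cite{CW26}.

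The core step is a linear-programming estimate. At a point, minimize $\sum_j\lambda_j a_j$ over weights $0\le a_j\le \kappa$ with $\sum a_j=A$, and over eigenvalue vectors subject to three constraints: ordering, $\lambda_1\ge-L_1\bar\lambda$, and the interpolated constraint $\sum_{j=1}^{\alpha}\lambda_j\ge-\alpha L_2\bar\lambda$. The average $\bar\lambda$ is fixed. By a rearrangement argument the minimum over $a$ puts maximal weight $\kappa$ on the smallest eigenvalues. We then split $\sum\lambda_ja_j$ as a combination of three quantities: $\lambda_1$, the $\alpha$-average, and the total trace $N\bar\lambda$. Concretely, we find nonnegative multipliers $s,t,u$ with
\[
\sum_j\lambda_ja_j\ \ge\ s\,\lambda_1+t\sum_{j=1}^{\alpha}\lambda_j+u\sum_{j=1}^N\lambda_j .
\]
This is an Abel-summation/convexity step, using that $\lambda_j$ is nondecreasing. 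It is valid as long as the induced coefficients are compatible with $\kappa$, and this is where the upper bounds on $\alpha$ in the statement enter. Plugging in the two hypotheses \eqref{eq:main-two-conditions} gives a lower bound $(-sL_1-t\alpha L_2+uN)\bar\lambda$. Positivity of this combination, after inserting the explicit $\kappa_n$ and $C_n$, should reduce exactly to \eqref{eq:main-bochner-region}. The coefficients $\frac{3n(N-2)}{N-1}$ and $N-3$ appear there, matching the structure of $\theta(n,\alpha)$ in \eqref{eq:theta-definition}. The condition \eqref{eq:main-independence-region} is used only to assert the non-implication claims, via explicit eigenvalue vectors. It is not needed for rigidity beyond ensuring the range is nonempty.

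The main obstacle is choosing the multipliers $s,t$ optimally. The naive choice $s=0$ recovers \cite{CW26} and requires $L_2\le\theta$. To beat it, we must exploit $\lambda_1$ to control the worst-case configuration, where $\lambda_1=\dots=\lambda_{\lfloor\alpha\rfloor+1}$ are very negative. We will try a decomposition in which weight $\kappa$ sits on $\lambda_1$ and the remaining mass is spread so that the $\alpha$-average absorbs the rest. This splits $\kappa c_n|W|^2$ between the two constraints in the proportions $\frac{3n(N-2)}{N-1}$ and $N-3$, and we verify that the rearrangement inequality holds in the stated $\alpha$-range. Equality analysis at $L$ on the boundary of \eqref{eq:main-bochner-region} needs care. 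In that case we argue that $\langle\mathcal Q(W),W\rangle=0$ together with $\nabla W=0$ forces $W=0$ when $\bar\lambda>0$, and forces $R\equiv0$ when $\bar\lambda=0$.
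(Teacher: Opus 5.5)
Your overall architecture (integrate a Bochner identity, prove pointwise nonnegativity of the curvature term, analyze equality) is right, and your weighted-Weyl step is essentially Lemma~\ref{lem:weighted-Weyl}. However, the proposal has two genuine gaps.

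\textbf{The form of the Bochner curvature term.} Your structural assumption about this term is false. It is not of the form $\sum_j\lambda_j|S_jW|^2-C\bar\lambda|W|^2$. Writing $\lambda_j=\bar\lambda+\mu_j$ and using \eqref{eq:W-eigenvalues}, the identity \eqref{eq:exact-Bochner} becomes
\[
3\langle\Delta R,R\rangle=\sum_{j=1}^N\lambda_j|S_jW|^2+\frac{4(2N-3n+6)}{n}\,\bar\lambda|W|^2+16\sum_{j=1}^N(\lambda_j-\bar\lambda)^3 .
\]
The last term, $16\operatorname{tr}(\mathring W^3)$, is an independent cubic invariant for $n\ge6$. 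For instance, on $S^3\times S^3$ and on the Einstein product $S^2\times S^4$ one has $\langle\Delta R,R\rangle=0$. Yet $\sum_j(\lambda_j-\bar\lambda)|S_jW|^2/(\bar\lambda|W|^2)$ equals $4$ and $-16$ respectively, so no nontrivial combination $a\sum_j\lambda_j|S_jW|^2+b\bar\lambda|W|^2$ vanishes on both. This cubic term is very negative when $\lambda_1$ is very negative, and controlling it is exactly what the hypothesis $\lambda_1\ge-L_1\bar\lambda$ is for.

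Your linear-programming step acts only on the weighted Weyl term. Even if it succeeded, it would not give $\langle\Delta R,R\rangle\ge0$, and the hope that positivity ``reduces exactly to'' \eqref{eq:main-bochner-region} has no basis.

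In the paper the two hypotheses play separate roles:
\begin{itemize}
\item The $\alpha$-average bound alone controls the Weyl term through Lemma~\ref{lem:weighted-Weyl} with $c=L_2$. This is where $\alpha\le\frac{n^2+n-8}{4n-8}=\frac{N-3}{2(n-2)}$ and the coefficient $N-3$ come from.
\item The leftover cubic polynomial $\mathcal F_{L_2}(\lambda)$ is handled by setting $x_j=\lambda_j/\bar\lambda+L_1\ge0$ and minimizing $\sum_j(x_j^3-A_0x_j^2)$ over the simplex $\sum_jx_j=N(1+L_1)$ (Lemma~\ref{ImLm}). The admissibility condition $A_0\le A_*=\frac{(2N-1)(1+L_1)}{N-1}$ there is precisely \eqref{eq:main-bochner-region}. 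The coefficient $\frac{3n(N-2)}{N-1}$ comes from $3L_1-A_*$, not from splitting $\sum_j|S_jW|^2$ between two constraints.
\end{itemize}

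\textbf{The boundary case.} Your claim that \eqref{eq:main-independence-region} is irrelevant to rigidity is wrong, and the boundary case is left without an argument. When \eqref{eq:main-bochner-region} holds with equality, the pointwise lower bound is also attained at the spectrum $(-L_1,\frac{N+L_1}{N-1},\dots,\frac{N+L_1}{N-1})\bar\lambda$. For this spectrum $|W|^2=\frac{4N(1+L_1)^2}{3(N-1)}\bar\lambda^2>0$. So $\nabla R=0$ and $\langle\Delta R,R\rangle=0$ do not by themselves force $W=0$, and your sketch gives no mechanism to exclude this case.

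The paper excludes it using both inequalities of \eqref{eq:main-independence-region}:
\begin{itemize}
\item The upper bound on $L_1$ makes this spectrum satisfy the cone condition of \cite{CW26} strictly, so Lemma~\ref{lem:weighted-Weyl} applies with $c=\theta(n,\alpha)$.
\item Then \eqref{eq:F-difference} together with $L_2>\theta(n,\alpha)$ gives $\langle\Delta R,R\rangle\ge\mathcal F_{\theta}(\lambda)>\mathcal F_{L_2}(\lambda)=0$, contradicting $\langle\Delta R,R\rangle=0$.
\end{itemize}
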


\begin{remark}\label{rem:complementary}
Since $L_2>\theta(n, \alpha)$, the second condition in
\eqref{eq:main-two-conditions} is strictly weaker than the cone
condition considered by Cheng and Wang in \cite{CW26}, namely,
\[
\frac1\alpha\sum_{j=1}^{\alpha}\lambda_j
\ge-\theta(n, \alpha)\bar\lambda.
\]
The additional condition on the smallest eigenvalue and the cone condition considered in \cite{CW26} do not imply each other; see Lemma \ref{lem:spectral-independence}. Moreover, the pair of conditions in \eqref{eq:main-two-conditions} neither implies nor is implied by the condition in \cite{CW26}. Thus, Theorem \ref{thm:main} complements the rigidity theorem proved in \cite{CW26}.
\end{remark}

\begin{remark}
The set of parameters satisfying \eqref{eq:main-independence-region} and
\eqref{eq:main-bochner-region} is nonempty. Indeed,
$\theta(n, \alpha)$ is strictly decreasing in $\alpha$, and hence
$\theta(n, \alpha)<\theta(n, 1)$ for $\alpha>1$. A direct computation gives
\[
\frac{N(\alpha-1)+\alpha(N-1)\theta(n, \alpha)}
     {N-\alpha}
-\theta(n, \alpha)
=
\frac{N(\alpha-1)(1+\theta(n, \alpha))}{N-\alpha}>0.
\]
For $\varepsilon>0$, set
$L_2=\theta(n, \alpha)+\varepsilon$ and
$L_1=\theta(n, \alpha)+2\varepsilon$. If
\[
2\varepsilon<
\frac{N(\alpha-1)+\alpha(N-1)\theta(n, \alpha)}
     {N-\alpha}
-\theta(n, \alpha),
\]
then \eqref{eq:main-independence-region} holds. Moreover,
\[
2N-6n+6+\frac{3n}{N-1}
=
\big(
N-3+\frac{3n(N-2)}{N-1}
\big)\theta(n, 1)
>
\big(
N-3+\frac{3n(N-2)}{N-1}
\big)\theta(n, \alpha).
\]
For the above choice of $L_1$ and $L_2$, we have
\begin{align*}
&(N-3)L_2+\frac{3n(N-2)}{N-1}L_1\\
&=
\big(
N-3+\frac{3n(N-2)}{N-1}
\big)\theta(n, \alpha)
+
\varepsilon
\big(
N-3+\frac{6n(N-2)}{N-1}
\big).
\end{align*}
Therefore, by choosing $\varepsilon>0$ sufficiently small,
both \eqref{eq:main-independence-region} and
\eqref{eq:main-bochner-region} are satisfied.
\end{remark}

The paper is organized as follows. Section 2 recalls the curvature operator of the second kind, the relevant Weyl tensor identities, and the Bochner formula. In Section 3, we compare the new eigenvalue conditions with the cone condition considered in \cite{CW26}, establish the key algebraic estimate needed for the Bochner argument, and prove Theorem \ref{thm:main}.

\section{Preliminaries}
In this section, we fix the notation and recall some basic facts about the curvature operator of the second kind on Einstein manifolds. We also collect several identities and estimates involving the Weyl tensor and recall the Bochner formula for the Riemann curvature tensor. These formulas give the lower bound for $\langle\Delta R, R\rangle$ that will be used in the proof of the main theorem. For further details, we refer the reader to \cite{BK78, CGT23, CW26, Li22PAMS, Li22product, Nishikawa86}.

\subsection{The curvature operator of the second kind}

Let $(V, g)$ be an $n$-dimensional Euclidean vector space. We identify $V$ with its dual space $V^*$ via the metric $g$. Denote by $S^2(V)$ the space
of symmetric two-tensors and by
\[
S_0^2(V)=\{\varphi\in S^2(V): \tr_g\varphi=0\}
\]
its trace-free subspace. We have the orthogonal decomposition
\[
S^2(V)=S_0^2(V)\oplus\R g,
\qquad
\dim S_0^2(V)=N=\frac{(n-1)(n+2)}2.
\]
Let $R$ be an algebraic curvature tensor on $V$. It induces a self-adjoint operator
\[\bar R:S^2(V)\rightarrow S^2(V),
\qquad
(\bar R\varphi)_{ij}
=
\sum_{k,l=1}^n R_{iklj}\varphi_{kl}.\]
Let $\pi:S^2(V)\to S_0^2(V)$ be the orthogonal projection.
As observed in \cite{NPW23}, the curvature operator of the second
kind introduced by Nishikawa \cite{Nishikawa86} can be represented
by the self-adjoint operator
\[
\mathring R
:=
\pi\circ\bar R|_{S_0^2(V)}
:S_0^2(V)\rightarrow S_0^2(V).
\]
Equivalently, it is characterized by
\[\langle\mathring R\varphi,\psi\rangle
=
\langle\bar R\varphi,\psi\rangle
=
R_{iklj}\varphi_{kl}\psi_{ij},
\qquad
\varphi,\psi\in S_0^2(V).\]

When $V=T_pM$ and $R$ is the Riemann curvature tensor of an
Einstein manifold $(M^n,g)$, we denote the eigenvalues of
$\mathring R$ by
\[
\lambda_1\le\cdots\le\lambda_N.
\]
Let $\{S_j\}_{j=1}^N$ be a corresponding orthonormal eigenbasis
of $S_0^2(T_pM)$.
For an Einstein manifold, the average eigenvalue is related to the scalar curvature by
\begin{equation}\label{eq:scalar-average}
\bar\lambda=\frac1N\sum_{j=1}^N\lambda_j
=\frac{\Scal}{n(n-1)},
\end{equation}
see \cite[Proposition 2.1]{CW26-1}. For symmetric two-tensors $A$ and $B$, the Kulkarni--Nomizu product is defined by
\[
(A\KN B)_{ijkl}
=A_{ik}B_{jl}+A_{jl}B_{ik}-A_{il}B_{jk}-A_{jk}B_{il}.
\]
For an Einstein metric, the curvature decomposition is given by
\begin{equation}\label{eq:Einstein-decomposition}
R=W+\frac{\Scal}{2n(n-1)}g\KN g
=W+\frac{\bar\lambda}{2}g\KN g,
\end{equation}
where $W$ is the Weyl tensor.

\subsection{Weyl tensor identities and the Bochner formula}

If $S\in S^2(V)$ and $T$ is a covariant $k$-tensor, define
\[
(ST)(X_1, \ldots, X_k)
=
\sum_{a=1}^k
T(X_1,\ldots, SX_a, \ldots, X_k).
\]
Let $\{S_j\}_{j=1}^N$ be an orthonormal eigenbasis of $\mathring R$.
We shall use the identity
\begin{equation}\label{eq:W-eigenvalues}
|W|^2
=
\frac43
\left(
\sum_{j=1}^N\lambda_j^2-N\bar\lambda^2
\right), 
\end{equation}
see \cite{CW26, NPW23}.

The following estimate is the form of
\cite[Lemma 3.1]{CW26} used below.

\begin{lemma}\label{lem:weighted-Weyl}
Let $(M^n,g)$ be an $n$-dimensional Einstein manifold with $n\ge6$, and let $1\le\alpha\le\frac{N-3}{2(n-2)}$.
If $c\ge0$ and
\[
\frac1\alpha\sum_{j=1}^{\alpha}\lambda_j
\ge-c\bar\lambda,
\]
then
\begin{equation}\label{eq:weighted-Weyl}
\sum_{j=1}^N\lambda_j|S_jW|^2
\ge
-\frac{16(N-3)}{3n}c\bar\lambda
\left(
\sum_{j=1}^N\lambda_j^2-N\bar\lambda^2
\right).
\end{equation}
\end{lemma}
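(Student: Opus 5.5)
The plan is to treat $a_j:=|S_jW|^2\ge 0$ as weights and reduce \eqref{eq:weighted-Weyl} to a linear minimization over the weights $a_j$. Two pieces of information about the weights are needed: an exact value for their total sum $\sum_j a_j$, and a uniform upper bound for each individual $a_j$. By \eqref{eq:W-eigenvalues}, the right-hand side of \eqref{eq:weighted-Weyl} equals
\[
-\frac{4(N-3)}{n}\,c\bar\lambda\,|W|^2 .
\]
So the target is an identity $\sum_{j=1}^N |S_jW|^2=\kappa|W|^2$ with $\kappa=\frac{4(N-3)}{n}$, together with a pointwise bound $|SW|^2\le \mu |W|^2$ for every unit $S\in S^2_0(V)$, where $\mu=\frac{8(n-2)}{n}$. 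Note that $\kappa/\mu=\frac{N-3}{2(n-2)}$, which is exactly the upper bound on $\alpha$ in the hypothesis. This strongly suggests that these are the right constants.

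\emph{Step 1: the trace identity.} I will expand $|SW|^2=\sum |(SW)_{ijkl}|^2$ using the derivation formula $(SW)_{ijkl}=S_{ip}W_{pjkl}+\cdots$. The sum over an orthonormal basis of $S^2_0(V)$ is basis-independent, so I can replace $\sum_j S_j\otimes S_j$ by the projector $\tfrac12(\delta_{ik}\delta_{jl}+\delta_{il}\delta_{jk})-\tfrac1n\delta_{ij}\delta_{kl}$. The resulting contractions are then reduced using the trace-freeness of $W$ and the first Bianchi identity. This leaves $\kappa|W|^2$ with $\kappa=\frac{4(N-3)}{n}$ after simplification, and it is a routine index computation.

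\emph{Step 2: the pointwise bound.} I expect this to be the main obstacle. The claim is that for a unit traceless $S$, say diagonal with entries $s_i$ and $\sum_i s_i=0$, $\sum_i s_i^2=1$, one has
\[
|SW|^2=\sum_{ijkl}(s_i+s_j+s_k+s_l)^2W_{ijkl}^2\le \frac{8(n-2)}{n}|W|^2 .
\]
A crude bound $\max(s_i+s_j+s_k+s_l)^2$ is too weak. First I would use the symmetries of $W$: the only nonzero components have $i\neq j$ and $k\ne l$. I would then group the terms according to how many of the four indices coincide. Finally I would use the trace-free condition $\sum_p W_{pjpl}=0$ to control the terms with two repeated indices, which are where $(2s_i+2s_j)^2$ could be large. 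If this bound turns out harder than expected, I would instead quote the corresponding estimate from \cite{CW26}; the lemma is stated there as Lemma 3.1.

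\emph{Step 3: the linear programming argument.} With $0\le a_j\le \mu|W|^2$ and $\sum_j a_j=\kappa|W|^2$, the quantity $\sum_j \lambda_j a_j$ is minimized by placing the maximal mass on the smallest eigenvalues. Setting $\beta=\kappa/\mu=\frac{N-3}{2(n-2)}$, this gives
\[
\sum_j\lambda_j a_j\ \ge\ \mu|W|^2\sum_{j=1}^{\beta}\lambda_j ,
\]
with the interpolated sum \eqref{eq:interpolated-sum-introduction}. This is the usual rearrangement bound; the fractional part is handled by the same comparison. Since the eigenvalues are ordered, the averages $\frac1t\sum_{j=1}^{t}\lambda_j$ are nondecreasing in the real variable $t$. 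Because $\alpha\le\beta$, the hypothesis gives $\frac1\beta\sum_{j=1}^\beta\lambda_j\ge-c\bar\lambda$. Hence
\[
\sum_j\lambda_j a_j\ge -c\bar\lambda\,\beta\mu|W|^2=-c\bar\lambda\,\kappa|W|^2 ,
\]
which is \eqref{eq:weighted-Weyl} after substituting \eqref{eq:W-eigenvalues}. The sign of $\bar\lambda$ never enters, since only $\mu|W|^2\ge0$ multiplies the partial-sum inequality.
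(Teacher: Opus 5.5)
Your three-step argument is correct, and it supplies a self-contained proof of a statement that the paper only imports from \cite[Lemma~3.1]{CW26}. The threshold $\alpha\le\frac{N-3}{2(n-2)}=\kappa/\mu$ suggests that the cited result rests on this same mechanism of a trace identity followed by rearrangement.

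Your Step~1 constant checks out. Write $SW=T_1+\dots+T_4$ by slot and sum over the basis. The four terms $|T_a|^2$ give $4\bigl(\frac{n+1}{2}-\frac1n\bigr)|W|^2$. The mixed terms $2\langle T_a,T_b\rangle$ give $2\bigl(-\frac12-\frac1n\bigr)|W|^2$ for each of the two same-pair choices and $2\bigl(\frac14-\frac1n\bigr)|W|^2$ for each of the other four. This uses $\sum_p W_{pjpl}=0$ and $\sum W_{ijkl}W_{ikjl}=\frac12|W|^2$. The total is $2n+2-\frac{16}{n}=\frac{4(N-3)}{n}$.

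Step~3 is fine. The monotonicity of the averages follows because $t\mapsto\sum_{j=1}^{t}\lambda_j$ is convex and vanishes at $t=0$.

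What needs correcting is your assessment of Step~2. The crude termwise bound you dismiss is exactly sufficient, and neither trace-freeness nor Bianchi is needed there.

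\begin{itemize}
\item Take $S=\operatorname{diag}(s_1,\dots,s_n)$ with $\sum_a s_a=0$ and $\sum_a s_a^2=1$.
\item Every nonzero $W_{ijkl}$ has $i\ne j$ and $k\ne l$, so each index occurs at most twice.
\item Hence $s_i+s_j+s_k+s_l=\langle v,s\rangle$, where $v$ is $2e_a+2e_b$, $2e_a+e_b+e_c$, or $e_a+e_b+e_c+e_d$ for distinct $a,b,c,d$.
\item Since $s\perp(1,\dots,1)$, Cauchy--Schwarz gives $\langle v,s\rangle^2\le|v|^2-\frac1n\bigl(\sum_a v_a\bigr)^2$. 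This equals $8-\frac{16}{n}$, $6-\frac{16}{n}$ and $4-\frac{16}{n}$ in the three cases.
\end{itemize}

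So $(s_i+s_j+s_k+s_l)^2\le\frac{8(n-2)}{n}$ for every contributing term, and $|SW|^2\le\frac{8(n-2)}{n}|W|^2$ follows at once. The $(2s_i+2s_j)^2$ terms you worried about are the extremal case, not an obstruction.

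Your fallback of citing \cite[Lemma~3.1]{CW26} would be circular, because that reference is the lemma itself rather than the pointwise bound.
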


For any tensor field $Y$, we use the rough Laplacian
$\Delta Y=\tr_g(\nabla^2Y)$. In particular,
\begin{equation}\label{eq:norm-Bochner}
\Delta|R|^2
=
2|\nabla R|^2+2\langle\Delta R, R\rangle.
\end{equation}
For an Einstein manifold, the Bochner formula can be written as
\begin{align}\label{eq:exact-Bochner}
\begin{aligned}
3\langle\Delta R, R\rangle
={}&
\sum_{j=1}^N\lambda_j|S_jW|^2
-\frac{16N(2N-9n+6)}{3n}\bar\lambda^3 \\
&+
\frac{16(2N-12n+6)}{3n}\bar\lambda
\sum_{j=1}^N\lambda_j^2
+16\sum_{j=1}^N\lambda_j^3,
\end{aligned}
\end{align}
see \cite{CW26, CW26-1, DF24}. Combining
\eqref{eq:weighted-Weyl} with \eqref{eq:exact-Bochner}, we obtain
\begin{align}
3\langle\Delta R, R\rangle
\ge{}&
\frac{16N}{3n}
\bigl[(N-3)c-(2N-9n+6)\bigr]\bar\lambda^3
\nonumber\\
&+
\frac{16}{3n}
\bigl[2N-12n+6-(N-3)c\bigr]\bar\lambda
\sum_{j=1}^N\lambda_j^2
+16\sum_{j=1}^N\lambda_j^3.
\label{eq:Bochner-lower-bound}
\end{align}

For later use, write $\lambda=(\lambda_1, \ldots, \lambda_N)$ and define
\begin{equation}\label{eq:F-b}
\begin{aligned}
\mathcal F_c(\lambda)
=\frac{16}{3}\Bigg[
&\sum_{j=1}^N\lambda_j^3
+\frac{2N-12n+6-(N-3)c}{3n}\,
 \bar\lambda\sum_{j=1}^N\lambda_j^2\\
&+\frac{N\bigl[(N-3)c-(2N-9n+6)\bigr]}{3n}
 \bar\lambda^3
\Bigg].
\end{aligned}
\end{equation}
Then \eqref{eq:Bochner-lower-bound} becomes
\begin{equation}\label{eq:Delta-lower-F}
\langle\Delta R, R\rangle
\ge\mathcal F_c(\lambda).
\end{equation}
Moreover, for $c, d\ge0$,
\begin{equation}\label{eq:F-difference}
\mathcal F_c(\lambda)-\mathcal F_d(\lambda)
=
\frac{16(N-3)}{9n}(d-c)\bar\lambda
\Big(
\sum_{j=1}^N\lambda_j^2-N\bar\lambda^2
\Big).
\end{equation}

\section{Proof of the main theorem}

The following lemma shows that, since $L_2>\theta(n, \alpha)$, the bound on
the average of the first $\alpha$ eigenvalues is strictly weaker than the
condition considered in \cite{CW26}, whereas the additional smallest-eigenvalue
bound is independent of it. Hence our rigidity theorem complements the result
proved there.

\begin{lemma}\label{lem:spectral-independence}
Let $N=(n-1)(n+2)/2$, $1<\alpha\le N-1$, and let $\theta=\theta(n, \alpha)\ge0$ be defined by \eqref{eq:theta-definition}. Let $L_1, L_2\ge0$, and suppose that $\lambda_1 \le \cdots \le \lambda_N$ with $\sum_{i=1}^N\lambda_i=N\bar\lambda>0$. Consider the conditions 
\[
\mathcal A: \quad
\lambda_1\ge-L_1\bar\lambda,
\qquad
\mathcal B: \quad
\frac1\alpha\sum_{i=1}^{\alpha}\lambda_i
\ge-L_2\bar\lambda,
\]
and the condition used in \cite[Theorem 1.1]{CW26},
\[
\mathcal C:\quad
\frac1\alpha\sum_{i=1}^{\alpha}\lambda_i
\ge-\theta(n, \alpha)\bar\lambda.
\]
Then the following assertions
\begin{enumerate}
\item $\mathcal A$ and $\mathcal C$ do not imply each other;
\item $\mathcal B$ is strictly weaker than $\mathcal C$;
\item $\mathcal A$ and $\mathcal B$ do not imply each other,
\end{enumerate}
hold simultaneously if and only if
\begin{equation}\label{eq:independence-region}
\theta(n, \alpha)<L_2<L_1<
\frac{N(\alpha-1)+\alpha(N-1)\theta(n, \alpha)}
     {N-\alpha}.
\end{equation}

\end{lemma}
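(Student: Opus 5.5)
Throughout I normalize $\bar\lambda=1$, so every admissible vector is a nondecreasing $\lambda\in\R^N$ with $\sum_i\lambda_i=N$. All three conditions are invariant under positive scaling, so this loses nothing. The plan is to compute, for each pair of conditions, the exact range of the controlled quantity ($\lambda_1$ or $\frac1\alpha\sum_{i=1}^\alpha\lambda_i$) over vectors satisfying the other condition. Each implication then becomes an inequality between $L_1$, $L_2$, $\theta$ and one explicit threshold. Put
\[
U(c):=\frac{N(\alpha-1)+\alpha(N-1)c}{N-\alpha},
\]
which is strictly increasing in $c\ge0$ since $N-\alpha>0$. The right endpoint in \eqref{eq:independence-region} is $U(\theta)$.

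\textbf{Step 1 (a bound on $\lambda_1$ implies a bound on the $\alpha$-average).} If $\lambda_1\ge -L$, then $\frac1\alpha\sum_{i=1}^\alpha\lambda_i\ge\lambda_1\ge -L$. This is sharp. Take $\lambda_1=\cdots=\lambda_{\lfloor\alpha\rfloor+1}=-L$ and let the remaining entries be equal and large enough to make the sum $N$; this needs $\lfloor\alpha\rfloor+1<N$, which holds because $\alpha\le N-1$ and $\alpha$ is non-integer. By varying the common value one gets every average in any range $[-L,-L+\delta)$. Hence:
\begin{itemize}
\item $\mathcal A\Rightarrow\mathcal C$ iff $L_1\le\theta$;
\item $\mathcal A\Rightarrow\mathcal B$ iff $L_1\le L_2$;
\item $\mathcal C\Rightarrow\mathcal B$ iff $\theta\le L_2$.
\end{itemize}
For the last item, taking averages strictly between $-L_2$ and $-\theta$ shows that $\mathcal B$ is strictly weaker than $\mathcal C$ exactly when $L_2>\theta$. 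This settles (2).

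\textbf{Step 2 (an $\alpha$-average bound implies a bound on $\lambda_1$; the main step).} I claim that $\frac1\alpha\sum_{i=1}^\alpha\lambda_i\ge -c$ forces $\lambda_1\ge -U(c)$, with equality attained.

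For the bound, fix such a $\lambda$. Replace $\lambda_2,\dots,\lambda_N$ by their mean $s=(N-\lambda_1)/(N-1)$. The new vector is still ordered, because $s\ge\lambda_2\ge\lambda_1$, and it has the same total sum. Its interpolated partial sum is $\lambda_1+(\alpha-1)s$.

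The point to check carefully, because of the fractional weight in \eqref{eq:interpolated-sum-introduction}, is the comparison of partial sums. The interpolated sum of the first $\alpha-1$ entries among $\lambda_2,\dots,\lambda_N$ is a weighted sum of the smallest entries, with weights in $[0,1]$ and total weight $\alpha-1$. For a nondecreasing list, such a sum is at most $(\alpha-1)$ times the list's mean. This is the monotonicity of running averages of an ordered sequence, extended linearly to the fractional term.

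Therefore $\lambda_1+(\alpha-1)s\ge\sum_{i=1}^\alpha\lambda_i\ge -\alpha c$. Substituting $s$ gives $N-(N-\alpha)s\ge-\alpha c$, so $s\le (N+\alpha c)/(N-\alpha)$. Then
\[
\lambda_1=N-(N-1)s\ge-\frac{N(\alpha-1)+\alpha(N-1)c}{N-\alpha}=-U(c).
\]
Equality is realized by $\lambda_2=\cdots=\lambda_N=s$ at this extremal value of $s$. By continuity one also gets vectors satisfying the average condition with any $\lambda_1$ slightly above $-U(c)$. Hence:
\begin{itemize}
\item $\mathcal C\Rightarrow\mathcal A$ iff $L_1\ge U(\theta)$;
\item $\mathcal B\Rightarrow\mathcal A$ iff $L_1\ge U(L_2)$.
\end{itemize}

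\textbf{Step 3 (assembly).}
\begin{itemize}
\item By Steps 1 and 2, (1) holds iff $\theta<L_1<U(\theta)$.
\item By Step 1, (2) holds iff $L_2>\theta$.
\item By Steps 1 and 2, (3) holds iff $L_2<L_1<U(L_2)$.
\end{itemize}

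If all three hold, we get $\theta<L_2<L_1<U(\theta)$, which is \eqref{eq:independence-region}.

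Conversely, assume \eqref{eq:independence-region}. Then (1) and (2) are immediate, and $L_2<L_1$ holds directly. For the remaining inequality in (3), monotonicity of $U$ and $L_2>\theta$ give $L_1<U(\theta)<U(L_2)$. Hence (3) holds as well.

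I expect the only delicate point to be the averaging/ordering argument in Step 2 with non-integer $\alpha$. The rest is direct computation.
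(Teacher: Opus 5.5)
Your argument is correct and follows the paper's proof closely. Your extremal vectors in Steps 1 and 2 are the paper's $U_t$ and $V_t$, and your thresholds $\theta$, $L_2$, $U(\theta)$, $U(L_2)$ are the paper's. The final assembly via monotonicity of $U$ (the paper's $D$) is identical.

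The one substantive difference is in Step 2. The paper obtains $\lambda_1\ge -D(c)$ by citing \cite[Lemma~3.2]{CW26}. You prove it directly: you replace $\lambda_2,\dots,\lambda_N$ by their mean and use that the interpolated partial sum $\beta\mapsto\sum_{j=1}^{\beta}\mu_j$ of a nondecreasing list is convex and piecewise linear, so it lies below its chord. This makes the lemma self-contained at the cost of a few lines.

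There is one small slip in Step 1. You justify $\lfloor\alpha\rfloor+1<N$ by saying $\alpha$ is non-integer, but the statement allows integer $\alpha$. For $\alpha=N-1$ your vector would have all $N$ entries equal to $-L$, which is incompatible with $\sum_i\lambda_i=N$. Two easy fixes are available:
\begin{enumerate}
\item use $\lceil\alpha\rceil\le N-1$ copies of $-L$, as the paper does; or
\item note that the hypothesis $\theta(n,\alpha)\ge0$ already forces $\alpha<N-1$.
\end{enumerate}
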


\begin{proof}
By homogeneity, we may assume that $\bar\lambda=1$. Write 
\[
\theta=\theta(n,\alpha),
\qquad
D(t)=\frac{N(\alpha-1)+\alpha(N-1)t}{N-\alpha}.
\]

We first compare $\mathcal A$ with $\mathcal C$. Since $\lambda_1$ is the smallest eigenvalue,
$\frac1\alpha\sum_{i=1}^{\alpha}\lambda_i\ge\lambda_1$.
Thus the condition $\mathcal A$ implies $\mathcal C$ whenever
$L_1\le\theta$. Suppose that $L_1>\theta$, and set
\[
U_t:=
\Big(
\underbrace{-t,\ldots,-t}_{\lceil\alpha\rceil},
\underbrace{
\frac{N + \lceil\alpha\rceil t}{N-\lceil\alpha\rceil},\ldots,\frac{N+ \lceil\alpha\rceil t}{N-\lceil\alpha\rceil}
}_{N-\lceil\alpha\rceil}
\Big), \quad t\ge0.
\]
The components of $U_t$ are arranged in nondecreasing order and satisfy 
\[
\sum_{i=1}^N(U_t)_i=N, \quad (U_t)_1=-t, \quad \frac1\alpha\sum_{i=1}^{\alpha}(U_t)_i=-t.
\]
Thus $U_{L_1}$ satisfies $\mathcal A$ but violates $\mathcal C$.
Therefore, the condition $\mathcal A$ implies $\mathcal C$ if and only if
$L_1\le\theta$.
We next consider the reverse implication. By
\cite[Lemma~3.2]{CW26}, the condition $\mathcal C$ gives
\[
\lambda_1\ge-D(\theta).
\]
Hence $\mathcal C$ implies $\mathcal A$ whenever
$L_1\ge D(\theta)$. To see that this condition is necessary, for
$t\ge0$ consider
\[
V_t=
\Big(
-D(t),
\underbrace{
\frac{N+\alpha t}{N-\alpha},\ldots,
\frac{N+\alpha t}{N-\alpha}
}_{N-1}
\Big).
\]
The components of $V_t$ are arranged in nondecreasing order and satisfy
\[
\sum_{i=1}^N(V_t)_i=N, \quad (V_t)_1=-D(t), \quad \frac1\alpha\sum_{i=1}^{\alpha}(V_t)_i=-t.
\]
If $L_1<D(\theta)$, then $V_\theta$ satisfies $\mathcal C$ but
violates $\mathcal A$. Consequently,
$\mathcal C$ implies $\mathcal A$ if and only if $L_1\ge D(\theta)$.
Hence, $\mathcal A$ and $\mathcal C$ do not imply each other if and
only if
\begin{equation}\label{eq:A-C-independence}
\theta<L_1<D(\theta).
\end{equation}

We now compare $\mathcal B$ with $\mathcal C$. 
If $L_2\ge\theta$, then $\mathcal C$ implies $\mathcal B$, whereas $U_\theta$ shows that this implication fails when $L_2<\theta$.
For $L_2=\theta$, the two conditions coincide, while for $L_2>\theta$, $U_{L_2}$ satisfies $\mathcal B$ but not $\mathcal C$.
Therefore, $\mathcal B$ is strictly weaker than
$\mathcal C$ if and only if
\begin{equation}\label{eq:B-C-comparison}
L_2>\theta.
\end{equation}

Finally, we compare $\mathcal A$ with $\mathcal B$. 
Since $\frac1\alpha\sum_{i=1}^{\alpha}\lambda_i\ge\lambda_1$, $\mathcal A$ implies $\mathcal B$ for $L_1\le L_2$, whereas $U_{L_1}$ shows that this implication fails for $L_1>L_2$.
Similarly, \cite[Lemma~3.2]{CW26} shows that $\mathcal B$ implies $\mathcal A$ for $L_1\ge D(L_2)$, whereas $V_{L_2}$ shows that this implication fails for $L_1<D(L_2)$.
Therefore, $\mathcal A$ and $\mathcal B$ do not imply each other if
and only if
\begin{equation}\label{eq:A-B-independence}
L_2<L_1<D(L_2).
\end{equation}

Together, conditions \eqref{eq:A-C-independence},
\eqref{eq:B-C-comparison}, and \eqref{eq:A-B-independence} are
equivalent to
\[
\theta<L_2<L_1<\min\{D(\theta),D(L_2)\}.
\]
Since $D(t)$ is strictly increasing and $L_2>\theta$, we have
$D(L_2)>D(\theta)$. Hence all three assertions hold if and only if
\[
\theta<L_2<L_1<D(\theta).
\]
Substituting the definition of $D$ yields
\eqref{eq:independence-region}.
\end{proof}

We next prove the constrained cubic estimate used in the Bochner argument.

\begin{lemma}\label{ImLm}
Let $m\ge3$ be an integer and let $x_1, \ldots, x_m\ge0$ satisfy
$\sum_{i=1}^m x_i=B_0>0$. If
\[
A_0\le A_*:=\frac{(2m-1)B_0}{m(m-1)},
\]
then
\begin{equation}\label{eq:ImLm-bound}
\sum_{i=1}^m(x_i^3-A_0x_i^2)
\ge \frac{B_0^3}{m^2}-\frac{A_0B_0^2}{m}.
\end{equation}
Equality holds if and only if either $A_0<A_*$ and
$x_1=\cdots=x_m=B_0/m$, or $A_0=A_*$ and
$(x_1, \ldots, x_m)$ is a permutation of
\[
\left(\frac{B_0}{m}, \ldots, \frac{B_0}{m}\right)
\quad \text{or} \quad
\left(0, \frac{B_0}{m-1}, \ldots, \frac{B_0}{m-1}\right).
\]
\end{lemma}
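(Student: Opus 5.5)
By homogeneity we may assume $B_0=m$, so the target bound is $\sum_i f(x_i)\ge m f(1)$, where $f(x)=x^3-A_0x^2$. The constraint becomes $A_0\le A_*=\frac{2m-1}{m-1}=2+\frac1{m-1}$. The plan is to minimize $\Phi(x)=\sum_i f(x_i)$ over the compact simplex $\{x_i\ge0,\ \sum x_i=m\}$ and show that the minimum is attained only at the configurations listed in Lemma~\ref{ImLm}.

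\textbf{Reduction to the worst value of $A_0$.} Write $x_i=1+y_i$ with $\sum y_i=0$ and $y_i\ge-1$. Expanding $f$ about $1$ gives
\[
f(x)-f(1)-f'(1)(x-1)=(x-1)^2(x+2-A_0).
\]
Since $\sum(x_i-1)=0$, the linear term drops out, and \eqref{eq:ImLm-bound} is equivalent to
\[
\sum_i y_i^3+(3-A_0)\sum_i y_i^2\ge0 .
\]
The left side is nonincreasing in $A_0$. Moreover, for $A_0<A_*$ it exceeds the $A_*$-value by $(A_*-A_0)\sum y_i^2$, which is positive unless $y=0$. So it suffices to treat $A_0=A_*$ and to identify its equality cases. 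The equality statement for $A_0<A_*$ then follows, because the extra term forces $y=0$.

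\textbf{Structure of minimizers.} At $A_0=A_*$, take a minimizer $x$ of $\Phi$ on the simplex. The KKT conditions give a multiplier $\mu$ with $f'(x_i)=3x_i^2-2A_0x_i=\mu$ whenever $x_i>0$, and $\mu\le f'(0)=0$ if some $x_i=0$. Since $f'$ is a strictly convex quadratic, the positive coordinates take at most two values $a\le b$ with $a+b=2A_0/3$. The second-order condition, applied to transferring mass between two positive coordinates, requires $f''(x_i)+f''(x_j)\ge0$, where $f''(x)=6x-2A_0$. Hence at most one positive coordinate is smaller than $A_0/3$. A minimizer therefore has the form: $k$ zeros, at most one coordinate equal to $a$, and the remaining $m-k-\epsilon$ coordinates equal to $b$, where $\epsilon\in\{0,1\}$.

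\textbf{Case analysis (main obstacle).} This is the step I expect to be hardest. First, show that $k\ge2$ cannot occur. Merging two zeros with a $b$ into the configuration $(0,\tfrac b2,\tfrac b2)$, or comparing directly, should give a strictly smaller value, using that $A_0\le 2+\frac1{m-1}$ is small relative to $b\approx\frac m{m-k}$. Next, with $k\le1$ and $\epsilon\le1$, the sum constraint leaves a one-parameter family. In the $y$-variables the inequality becomes an explicit polynomial inequality in one variable, which I would verify by factoring. In the case $k=1,\epsilon=0$ one gets $y=(-1,\frac1{m-1},\dots,\frac1{m-1})$, and
\[
-1+\frac{1}{(m-1)^2}+\frac{m-2}{m-1}\Bigl(1+\frac1{m-1}\Bigr)=0,
\]
which is exactly the second equality configuration. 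In the case $k=0$ with one coordinate $a$, the expression is $g(t)=t^3-\frac{t^3}{(m-1)^2}+\frac{m-2}{m-1}\bigl(t^2+\frac{t^2}{m-1}\bigr)$ with $y_1=t\ge-1$. This simplifies to $\frac{m(m-2)}{(m-1)^2}\,t^2(t+1)\ge0$, with equality only at $t=0$ or $t=-1$. This recovers both equality cases and completes the argument, modulo the bookkeeping that excludes $k\ge2$ and mixed configurations with $k=1,\epsilon=1$. For those configurations the same substitution again produces a cubic in one variable whose sign must be checked on the admissible interval.
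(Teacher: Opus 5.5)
\textbf{There is a genuine gap in the case analysis.}

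Your reduction to $A_0=A_*$ via $f(x)-f(1)-f'(1)(x-1)=(x-1)^2(x+2-A_0)$ is correct. So are the first- and second-order description of minimizers and the identity $g(t)=\frac{m(m-2)}{(m-1)^2}t^2(t+1)$. Together these settle every configuration with at most one zero and at most one coordinate different from the others.

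The proof is still incomplete. The configurations with $k\ge2$ zeros, and those with one zero plus a distinct value $a$, are exactly the boundary critical points the lemma must exclude, and you leave them as bookkeeping. Moreover, the heuristic you propose for $k\ge2$ fails. At a KKT point with a zero coordinate you have $\mu=f'(b)=b(3b-2A_0)\le0$, i.e.\ $b\le 2A_0/3$. Then $2f(b/2)-f(b)=b^2\bigl(\tfrac{A_0}{2}-\tfrac{3b}{4}\bigr)\ge0$, so the merge $(0,0,b)\mapsto(0,\tfrac b2,\tfrac b2)$ never lowers $\Phi$ at the points you need to rule out. Such points do pass your filters. With your normalization $B_0=m$, take $m=10$ and $A_0=A_*=19/9$, with two zeros and eight coordinates equal to $b=5/4<38/27=2A_0/3$. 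This satisfies both your first- and second-order conditions.

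\textbf{How the paper closes it.} What is missing is an actual comparison of values, and that is where the paper's proof does its work. Consider a face with $j$ positive coordinates, $l<j/2$ of them equal to $a$ (necessarily the smaller value), and set $A=A_0/3$. The relation $a+b=2A_0/3$ gives the closed form $\Phi=-2jA^3-3A^2(B_0-jA)+(B_0-jA)^3/(j-2l)^2$. Since $B_0-jA>0$ (this uses $A_*<3B_0/m$, i.e.\ $m\ge3$), every $l\ge1$ is strictly worse than the all-equal point $l=0$ on the same face, and $l=j/2$ is impossible. The all-equal points are then compared across faces via $B_0^2\bigl(\tfrac1j-\tfrac1m\bigr)\bigl[B_0\bigl(\tfrac1j+\tfrac1m\bigr)-A_0\bigr]\ge0$, with equality only when $j=m-1$ and $A_0=A_*$. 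Your second-order step already forces $l\le1$, so you would need only the case $l=1$ of the first identity together with the second.

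\textbf{An alternative within your framework.} You can finish by induction on $m$. Write $A_*(j)=B_0\bigl(\tfrac1j+\tfrac1{j-1}\bigr)$; this is decreasing in $j$, so $A_0\le A_*(m)<A_*(j)$ on any face with $j<m$ positive slots. The strict case of the lemma on that face (with $j\le2$ checked by hand), plus the factorization above, handles all boundary minimizers. That leaves only interior minimizers, which your $g(t)$ computation already covers.
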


\begin{proof}
The proof follows the same strategy as that of \cite[Proposition 3.4]{CW26-1}. For the reader's convenience, we provide the full details below.
Define
\[
f(x)=\sum_{i=1}^m(x_i^3-A_0x_i^2),
\qquad
\Sigma=
\Big\{x\in\mathbb{R}^m: 
x_i\ge 0, \ \sum_{i=1}^m x_i=B_0\Big\}.
\]
As $\Sigma$ is compact and $f$ is continuous, $f$ attains
its global minimum on $\Sigma$.
Let the corresponding Lagrangian function be
\[
\mathcal{L}(x, \mu)=\sum_{i=1}^m(x_i^3-A_0x_i^2)
+ \mu(\sum_{i=1}^m x_i - B_0),
\]
and let
\begin{align*}
\Omega_k = \left\{ x \in \Sigma :  x_1 = \cdots = x_{m-k} = 0, \, x_{m-k+1}, \ldots, x_m > 0 \right\}
\end{align*}
for $k = 1, \ldots, m$, where the condition on zero entries is omitted when $k=m$.
In $\Omega_k$, using the method of Lagrange multipliers, we find that the critical points
\[x=(\underbrace{0, \cdots, 0}_{m-k}, x_{m-k+1}, \cdots, x_m)\]
of $\mathcal{L}(x, \mu)$ satisfy
\[
\begin{cases}
3x_i^2-2A_0x_i+\mu=0, 
& i=m-k+1, \ldots, m,\\
\sum_{i=m-k+1}^{m}x_i=B_0.
\end{cases}
\]
Hence the possible critical points are given by
\begin{equation*}
P_{k,l}:= (\underbrace{0, \ldots, 0}_{m-k}, \underbrace{a, \ldots, a}_{l}, \underbrace{b, \ldots, b}_{k-l}),
\end{equation*}
for $0 \le l \le \frac{k}{2}$, where $a$ and $b$ are defined through
\begin{equation}\label{lm1:eqns1}
\begin{cases}
a + b = \frac{2A_0}{3}:=2A, \\
l a + (k-l)b = B_0.
\end{cases}
\end{equation}
If $l=\frac k2$ and \eqref{lm1:eqns1} admits a solution, then $a+b = \frac{2A_0}{3} = \frac{2B_0}{k}$, which yields $A_0=\frac{3B_0}{k}$. However, since $m\ge3$ and $k\le m$, we have
\[
A_0\le A_*<\frac{3B_0}{m}\le\frac{3B_0}{k},
\]
which contradicts $A_0=\frac{3B_0}{k}$. 
If $0 \le l < \frac{k}{2}$, according to \eqref{lm1:eqns1}, we have
\[
a = A - \frac{B_0 - kA}{k - 2l}, \qquad b = A + \frac{B_0 - kA}{k - 2l}.
\]
One can verify that
\begin{align*}
f(P_{k, l}) 
&= -2kA^3 - 3A^2 (B_0 - kA) + \frac{(B_0 - kA)^3}{(k-2l)^2}.
\end{align*}
Note that 
\[
A_0\le A_*<\frac{3B_0}{m}\le\frac{3B_0}{k},
\]
and consequently, $B_0-kA>0$. 
For $1\le l<k/2$, we have
\[
f(P_{k, l})-f(P_{k,0})
=(B_0-kA)^3
\left(
\frac{1}{(k-2l)^2}-\frac{1}{k^2}
\right)>0.
\]
Hence every global minimizer is a permutation of $P_{k,0}$
for some $1\le k\le m$.

It remains to compare the values of $f$ at $P_{k, 0}$ with $1\le k\le m$.
For $1\le k<m$, direct calculation gives
\begin{align}\label{lm:eqn-comparison}
\begin{aligned}
f(P_{k, 0})-f(P_{m,0})
&=\frac{B_0^3}{k^2}-\frac{A_0B_0^2}{k} - \left(\frac{B_0^3}{m^2} - \frac{A_0B_0^2}{m}\right)\\
&=B_0^2\left(\frac{1}{k}-\frac{1}{m}\right)
\left[
B_0\left(\frac{1}{k}+\frac{1}{m}\right)-A_0
\right].
\end{aligned}
\end{align}
Since $k\le m-1$,
\[
B_0\left(\frac{1}{k}+\frac{1}{m}\right)
\ge
B_0\left(\frac{1}{m-1}+\frac{1}{m}\right)
=A_*.
\]
Together with $A_0\le A_*$, this implies $f(P_{k, 0})\ge f(P_{m, 0})$.
Hence
\[
\min_{x\in\Sigma} f(x)
=f(P_{m, 0})
=\frac{B_0^3}{m^2}-\frac{A_0B_0^2}{m},
\]
which proves \eqref{eq:ImLm-bound}.

If $A_0<A_*$, then \eqref{lm:eqn-comparison} yields $f(P_{k, 0}) > f(P_{m, 0})$ for every $k<m$.
Thus $P_{m, 0}$ is the unique global minimizer.
If $A_0=A_*$, then for $k<m$ equality in
\eqref{lm:eqn-comparison} holds if and only if $k=m-1$.
Therefore the global minimizers are precisely $P_{m, 0}$ and the permutations of $P_{m-1, 0}$, proving the stated equality cases.
\end{proof}

We now establish the pointwise Bochner estimate under the two new spectral conditions.
In the following proposition, we do not assume the independence inequalities from Lemma \ref{lem:spectral-independence}.

\begin{proposition}\label{prop:two-spectral-conditions}
Let $(M^n, g)$ be an Einstein manifold with $n\ge6$. Set $N=\frac{(n-1)(n+2)}2$, and assume $1<\alpha\le\frac{N-3}{2(n-2)}$.
Suppose that
\begin{equation}\label{eq:two-spectral-conditions}
\lambda_1\ge-L_1\bar\lambda,
\qquad
\frac1\alpha\sum_{j=1}^{\alpha}\lambda_j
\ge-L_2\bar\lambda,
\end{equation}
where $L_1, L_2\ge0$ satisfy
\begin{equation}\label{eq:parameter-relation}
(N-3)L_2+\frac{3n(N-2)}{N-1}L_1
\le
2N-6n+6+\frac{3n}{N-1}.
\end{equation}
Then
\[
\langle\Delta R, R\rangle\ge0.
\]
Regarding the equality case, if \eqref{eq:parameter-relation} is
strict, then $\langle\Delta R, R\rangle=0$
if and only if
\begin{equation}\label{eq:equal-spectrum}
\lambda_1=\cdots=\lambda_N=\bar\lambda.
\end{equation}
If \eqref{eq:parameter-relation} is an equality and
$\langle\Delta R, R\rangle=0$ at a point, then either
\eqref{eq:equal-spectrum} holds or
\begin{equation}\label{eq:exceptional-spectrum}
(\lambda_1, \ldots, \lambda_N)
=
\left(
-L_1,
\frac{N+L_1}{N-1},\ldots,
\frac{N+L_1}{N-1}
\right)\bar\lambda.
\end{equation}
\end{proposition}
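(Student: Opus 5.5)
The plan is to reduce the claim to the constrained cubic inequality of Lemma \ref{ImLm}, after first using the weaker (average) eigenvalue bound to control the Weyl term.

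\textbf{Step 1: sign of $\bar\lambda$.} I first dispose of the case $\bar\lambda\le0$.
If $\bar\lambda<0$, then $\lambda_1\ge-L_1\bar\lambda\ge0$. Hence $\sum_j\lambda_j\ge0$, which contradicts $\sum_j\lambda_j=N\bar\lambda<0$.
If $\bar\lambda=0$, then $\lambda_1\ge0$ and $\sum_j\lambda_j=0$ force all $\lambda_j=0$. By \eqref{eq:W-eigenvalues} this gives $W=0$, so $R=0$ by \eqref{eq:Einstein-decomposition}, and $\langle\Delta R,R\rangle=0$.
So we may assume $\bar\lambda>0$, and by homogeneity $\bar\lambda=1$.

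\textbf{Step 2: remove the Weyl term.} Since $1<\alpha\le\frac{N-3}{2(n-2)}$, Lemma \ref{lem:weighted-Weyl} applies with $c=L_2$ to the second condition in \eqref{eq:two-spectral-conditions}. By \eqref{eq:Delta-lower-F} this gives $\langle\Delta R,R\rangle\ge\mathcal F_{L_2}(\lambda)$. It therefore suffices to show $\mathcal F_{L_2}(\lambda)\ge0$, using only $\lambda_1\ge-L_1$ and $\sum_j\lambda_j=N$.

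\textbf{Step 3: shift to nonnegative variables.} Put $x_i=\lambda_i+L_1\ge0$, so that $\sum_i x_i=B_0:=N(1+L_1)$, and apply Lemma \ref{ImLm} with $m=N$. Write
\[
a=\frac{2N-12n+6-(N-3)L_2}{3n}.
\]
Expanding $\sum_i(x_i-L_1)^3+a\sum_i(x_i-L_1)^2$, the linear and constant terms are fixed once $\sum_i x_i$ is fixed. Hence
\[
\tfrac{3}{16}\mathcal F_{L_2}(\lambda)=\sum_i\bigl(x_i^3-A_0x_i^2\bigr)+\text{const},
\qquad A_0=3L_1-a .
\]
The main check is the hypothesis $A_0\le A_*=\frac{(2N-1)(1+L_1)}{N-1}$ of Lemma \ref{ImLm}.
\begin{itemize}
\item Multiplying by $3n$, the condition becomes $9nL_1+(N-3)L_2-(2N-12n+6)\le\frac{3n(2N-1)(1+L_1)}{N-1}$.
\item Collecting the $L_1$ terms gives the coefficient $9n-\frac{3n(2N-1)}{N-1}=\frac{3n(N-2)}{N-1}$.
\item The constant on the right becomes $2N-12n+6+6n+\frac{3n}{N-1}=2N-6n+6+\frac{3n}{N-1}$.
\end{itemize}
So the condition is exactly \eqref{eq:parameter-relation}.

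\textbf{Step 4: evaluate the minimum.} Lemma \ref{ImLm} then shows that the minimum of $\mathcal F_{L_2}$ over the admissible set is attained at $x_i=B_0/N$, that is, at $\lambda_i=1$.
There $\sum_i\lambda_i^2-N\bar\lambda^2=0$, so by \eqref{eq:F-difference} the value does not depend on $c$. Direct substitution into \eqref{eq:F-b} gives
\[
N+(2N-12n+6)N/(3n)-(2N-9n+6)N/(3n)=N-N=0 .
\]
Hence $\mathcal F_{L_2}(\lambda)\ge0$, and so $\langle\Delta R,R\rangle\ge0$.

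\textbf{Step 5: equality cases.} If $\langle\Delta R,R\rangle=0$, then $\mathcal F_{L_2}(\lambda)=0$, so $x$ is a minimizer in Lemma \ref{ImLm}.
\begin{itemize}
\item If \eqref{eq:parameter-relation} is strict, then $A_0<A_*$ and the minimizer is unique, giving \eqref{eq:equal-spectrum}.
\item If \eqref{eq:parameter-relation} is an equality, then $A_0=A_*$ and the second minimizer is $x=(0,\frac{B_0}{N-1},\dots,\frac{B_0}{N-1})$. Since $\lambda_1$ is the smallest eigenvalue, the zero entry sits first. This gives $\lambda_1=-L_1$ and $\lambda_j=\frac{N(1+L_1)}{N-1}-L_1=\frac{N+L_1}{N-1}$ for $j\ge2$, which is \eqref{eq:exceptional-spectrum}.
\end{itemize}
Conversely, if \eqref{eq:equal-spectrum} holds, then $W=0$ by \eqref{eq:W-eigenvalues}. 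Thus $R=\frac{\bar\lambda}{2}g\KN g$, and $\bar\lambda$ is constant because the scalar curvature of an Einstein manifold with $n\ge3$ is constant. So $R$ is parallel and $\langle\Delta R,R\rangle=0$.

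The only step with real content is matching $A_0\le A_*$ with \eqref{eq:parameter-relation} in Step 3; everything else is bookkeeping once the shift $x_i=\lambda_i+L_1$ is chosen.
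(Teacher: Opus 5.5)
Your proposal is correct and is essentially the paper's own proof. It uses the same reduction to $\mathcal F_{L_2}$ via Lemma~\ref{lem:weighted-Weyl}, the same shift $x_j=\lambda_j/\bar\lambda+L_1$ with $A_0=3L_1-a$, the same identification of $A_0\le A_*$ with \eqref{eq:parameter-relation}, and the same equality cases of Lemma~\ref{ImLm}; evaluating the minimum at $\lambda_j=\bar\lambda$ only replaces the paper's explicit constant $N(1+L_1)^2(A_0-1-L_1)$.

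One small repair is needed in the converse of Step~5. The equality statement is pointwise, so $W=0$ at a single point does not make $R$ parallel there. Instead, when $W=0$ the right-hand side of \eqref{eq:exact-Bochner} equals $3\mathcal F_0(\lambda)$, which vanishes at $\lambda_1=\cdots=\lambda_N=\bar\lambda$ by your Step~4 computation; this is how the paper argues.
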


\begin{proof}
We work at a fixed point. The first part of \eqref{eq:two-spectral-conditions} shows
\[
-L_1\bar\lambda
\le\lambda_1
\le\frac1N\sum_{j=1}^N\lambda_j
=\bar\lambda,
\]
so we have $\bar\lambda\ge0$. If $\bar\lambda=0$, then
\eqref{eq:two-spectral-conditions} and
$\sum_{j=1}^N\lambda_j=0$ imply that all $\lambda_j$ vanish.
It follows from \eqref{eq:W-eigenvalues} and
\eqref{eq:Einstein-decomposition} that $W=0$ and $R=0$.

Suppose now that $\bar\lambda>0$. Applying
\eqref{eq:Delta-lower-F} with $c=L_2$, we obtain
\begin{equation}\label{eq:Delta-lower-F-L2}
\langle\Delta R, R\rangle
\ge\mathcal F_{L_2}(\lambda),
\end{equation}
where 
\begin{equation*}
\begin{aligned}
\mathcal F_{L_2}(\lambda)
=\frac{16 \bar\lambda^3}{3}\Big[
&\sum_{j=1}^N(\frac{\lambda_j}{\bar\lambda})^3
+\frac{2N-12n+6-(N-3)L_2}{3n}\,
\sum_{j=1}^N(\frac{\lambda_j}{\bar\lambda})^2\\
&-\frac{N\bigl[(2N-9n+6)-(N-3)L_2\bigr]}{3n}
\Big].
\end{aligned}
\end{equation*}
Setting  $x_j:={\lambda_j}/{\bar\lambda}+L_1$ gives $x_j\ge0$ and $\sum_{j=1}^Nx_j=N(1+L_1)$.
The second condition in \eqref{eq:two-spectral-conditions} also gives
\[
\frac1\alpha\sum_{j=1}^{\alpha}x_j\ge L_1-L_2.
\]
In what follows, we omit this additional constraint from the minimization problem, since this only enlarges the admissible set. Therefore, the lower bound obtained from Lemma~\ref{ImLm} remains valid on the original admissible set.
Set
\[
A_0:=3L_1-
\frac{2N-12n+6-(N-3)L_2}{3n}.
\]
A direct computation shows
\begin{equation}\label{eq:shifted-objective}
\begin{aligned}
\frac{3 \mathcal F_{L_2}(\lambda)}{16 \bar\lambda^3}
&=\sum_{j=1}^N(x_j-L_1)^3
+(3L_1-A_0)\,
\sum_{j=1}^N (x_j-L_1)^2 - N(3L_1-A_0+1)\\
&= \sum_{j=1}^N x_j^3 - A_0 \sum_{j=1}^N x_j^2
+N(1+L_1)^2\bigl(A_0-1-L_1\bigr).
\end{aligned}
\end{equation}
Here we used
\begin{align*}
&3L_1^2 N(1+L_1)-NL_1^3 + (3L_1 - A_0) (-2L_1 N(1+L_1)+NL_1^2) \\
&\quad - N(3L_1-A_0+1)= N(1+L_1)^2 (A_0-1-L_1).
\end{align*}
It can be verified that 
\begin{align*}
&\frac{(2N-1)N(1+L_1)}{N(N-1)}-A_0\\
&\quad=
\frac1{3n}
\left(
2N-6n+6+\frac{3n}{N-1}
-(N-3)L_2
-\frac{3n(N-2)}{N-1}L_1
\right)
\ge0.
\end{align*}
Applying Lemma \ref{ImLm} with $m=N,\ B_0=N(1+L_1),$ we get
\[
\sum_{j=1}^N(x_j^3-A_0x_j^2)
\ge
N(1+L_1)^3-A_0N(1+L_1)^2.
\]
Then \eqref{eq:Delta-lower-F-L2} and  \eqref{eq:shifted-objective} yield
\[
\langle\Delta R, R\rangle \ge \mathcal F_{L_2}(\lambda)\ge0.
\]

We now consider the equality cases. If
$\langle\Delta R, R\rangle=0$, then
\[
0=\langle\Delta R, R\rangle
\ge\mathcal F_{L_2}(\lambda)\ge0.
\]
Hence $\mathcal F_{L_2}(\lambda)=0$, so equality holds in the
application of Lemma \ref{ImLm}.
If \eqref{eq:parameter-relation} is strict, then $A_0<A_*$, and the
equality case of Lemma \ref{ImLm} gives
\[
x_1=\cdots=x_N=1+L_1,
\]
which is equivalent to \eqref{eq:equal-spectrum}.
If \eqref{eq:parameter-relation} is an equality, then $A_0=A_*$.
Lemma \ref{ImLm} gives either 
the same point with all coordinates equal or, up to
permutation,
\[
(x_1, \ldots, x_N)
=
\left(
0,
\frac{N(1+L_1)}{N-1},\ldots,
\frac{N(1+L_1)}{N-1}
\right).
\]
Since $\lambda_1\le\cdots\le\lambda_N$, we also have
$x_1\le\cdots\le x_N$, so the zero entry must be $x_1$. Returning to
the variables $\lambda_j$ gives \eqref{eq:exceptional-spectrum}.
Finally, if \eqref{eq:equal-spectrum} holds, then
\eqref{eq:W-eigenvalues} gives $W=0$, and
\eqref{eq:exact-Bochner} gives
$\langle\Delta R, R\rangle=0$.
\end{proof}

\begin{proof}[Proof of Theorem \ref{thm:main}]
By \eqref{eq:main-bochner-region} and
Proposition \ref{prop:two-spectral-conditions}, we have
\[
\Delta|R|^2
=
2|\nabla R|^2+2\langle\Delta R, R\rangle
\ge 2|\nabla R|^2
\]
everywhere on $M$. 
Since $M$ is closed, the divergence theorem gives
\[
0
=
\int_M\Delta|R|^2\,d\mu_g
=
2\int_M|\nabla R|^2\,d\mu_g
+
2\int_M\langle\Delta R, R\rangle\,d\mu_g.
\]
Both integrands are nonnegative, and hence
\begin{equation}\label{eq:global-equality}
\nabla R=0,
\qquad
\langle\Delta R, R\rangle=0
\end{equation}
everywhere on $M$.

As observed in the proof of
Proposition \ref{prop:two-spectral-conditions}, we have
$\bar\lambda\ge0$, and $\bar\lambda=0$ implies $R=0$.
It therefore remains to consider the case $\bar\lambda>0$.
By the equality statement in Proposition \ref{prop:two-spectral-conditions}, at each point either all eigenvalues satisfy $\lambda_1=\cdots=\lambda_N=\bar\lambda$ as in \eqref{eq:equal-spectrum}, or, when \eqref{eq:main-bochner-region} is an equality, they are given by \eqref{eq:exceptional-spectrum}. For the eigenvalues in \eqref{eq:exceptional-spectrum}, direct calculation gives 
\begin{equation}\label{eq:exceptional-alpha-average}
\frac1\alpha\sum_{j=1}^{\alpha}\lambda_j
=
\frac{N(\alpha-1)-(N-\alpha)L_1}
     {\alpha(N-1)}\bar\lambda
>
-\theta(n, \alpha)\bar\lambda,
\end{equation}
where the strict inequality follows from \eqref{eq:main-independence-region}. 
Therefore, applying \eqref{eq:Delta-lower-F} with 
$c=\theta(n, \alpha)$, we obtain
\[
\langle\Delta R, R\rangle
\ge\mathcal F_{\theta(n, \alpha)}(\lambda).
\]
On the other hand, the proof of
Proposition \ref{prop:two-spectral-conditions} gives
$\mathcal F_{L_2}(\lambda)\ge0$. Thus
\eqref{eq:global-equality} implies
\[
0=\langle\Delta R, R\rangle
\ge\mathcal F_{L_2}(\lambda)\ge0,
\]
and consequently $\mathcal F_{L_2}(\lambda)=0$.
For brevity, write $\theta=\theta(n, \alpha)$. Using
\eqref{eq:F-difference} and \eqref{eq:exceptional-spectrum}, we obtain
\begin{align*}
\mathcal F_\theta(\lambda)
= \mathcal F_\theta(\lambda)-\mathcal F_{L_2}(\lambda)
&=
\frac{16(N-3)}{9n}(L_2-\theta)\bar\lambda
\left(
\sum_{j=1}^N\lambda_j^2-N\bar\lambda^2
\right)\\
&= \frac{16(N-3)}{9n}(L_2-\theta)
\frac{N(1+L_1)^2}{N-1}\bar\lambda^3
>0,
\end{align*}
where the last inequality follows from
$L_2>\theta$ in \eqref{eq:main-independence-region} and
$\bar\lambda>0$. Hence
\[
\langle\Delta R, R\rangle
\ge\mathcal F_\theta(\lambda)>0,
\]
contradicting \eqref{eq:global-equality}. This contradiction rules out the second equality case.

Consequently,
\[
\lambda_1=\cdots=\lambda_N=\bar\lambda
\]
at every point. Equation \eqref{eq:W-eigenvalues} gives $W=0$, and
\eqref{eq:Einstein-decomposition} reduces to
\[
R=\frac{\bar\lambda}{2}g\KN g.
\]
Since the scalar curvature of an Einstein manifold is constant,
$M$ has constant sectional curvature $\bar\lambda$. If
$\bar\lambda=0$, then $M$ is flat. If $\bar\lambda>0$, its universal
cover is a round sphere, and hence $M$ is a spherical space form.
If $M$ is simply connected, then $M$ is isometric, up to scaling, to the round sphere.
\end{proof}

\section*{Acknowledgments}
The first author expresses her gratitude to her advisor, Professor Ying Zhang, for lots of encouragement and helpful suggestions. 
The authors would like to thank Professor Xiaolong Li for suggesting the problem studied in this paper and for many helpful discussions.
The research of this paper is partially supported by  NSF of Jiangsu Province Grant No.BK20231309.

\bibliographystyle{plain}
\bibliography{ref}

\end{document}